\documentclass[12pt,reqno]{amsart}
\usepackage{amsmath}
\usepackage{fullpage}

\theoremstyle{plain}
\newtheorem{thm}{Theorem}[section]

\theoremstyle{definition}
\newtheorem{defn}[thm]{Definition}
\newtheorem{ex}[thm]{Example}

\numberwithin{equation}{section}
\newcommand{\R}{\mathbb{R}}

\newcommand{\F}{\mathcal{F}}

 \usepackage{color}

\begin{document}
\title[...]{On fourth order retarded equations with functional boundary conditions: a unified approach}


\subjclass[2020]{Primary 47H10, secondary 34K10, 34B10, 34B18}%
\keywords{Fixed point index, cone, Birkhoff--Kellogg type result, retarded functional differential equation, functional boundary condition.}%

\author[A. Calamai]{Alessandro Calamai}
\address{Alessandro Calamai, 
Dipartimento di Ingegneria Civile, Edile e Architettura,
Universit\`{a} Politecnica delle Marche
Via Brecce Bianche
I-60131 Ancona, Italy}%
\email{calamai@dipmat.univpm.it}%

\author[G. Infante]{Gennaro Infante}
\address{Gennaro Infante, Dipartimento di Matematica e Informatica, Universit\`{a} della
Calabria, 87036 Arcavacata di Rende, Cosenza, Italy}%
\email{gennaro.infante@unical.it}%

\begin{abstract}
By means of a recent Birkhoff-Kellogg type theorem, we discuss the solvability of a fairly general class of parameter-dependent fourth order retarded differential equations subject to functional boundary conditions. We seek solutions within a translate cone of nonnegative functions. We provide an example to illustrate our theoretical results.
\end{abstract}

\maketitle

\centerline{\it
Dedicated to Professor Jerome A.\ Goldstein in honor of his eightieth birthday}

\section{Introduction}

In this paper we investigate the existence of positive solutions of the 
following class of fourth order parameter-dependent functional differential equations with functional boundary conditions (BCs):
\begin{equation} \label{4ord-intro}
u^{(4)}(t)+\lambda F(t,u_t)=0,\ t \in [0,1],
\end{equation}
with initial conditions
\begin{equation}\label{kmt-intro}
u(t)=\psi(t),\ t \in [-r,0],
\end{equation}
and one of the following BCs
\begin{equation}\label{fbc-intro}
u^{(j)}(1)=\lambda B[u],
\end{equation}
where $j$ can be either $0$ or $1$, $2$, $3$. 

Fourth order ODEs with nonlocal and/or nonlinear boundary terms have been studied in the past, also in view of applications to mechanical systems,
 see for example the recent papers~\cite{CabJeb, genupa, giaml22, mayinzhang19, zhang22} and the references therein. 
Functional differential equations, in the case of fourth and higher order, have also been considered in the past, we mention here the manuscripts~\cite{BCFP17, jank}. 

We show that, under suitable assumptions, the functional boundary value problem (FBVP) \eqref{4ord-intro}--\eqref{kmt-intro}--\eqref{fbc-intro} admits a solution of the form
$(\bar\lambda,\bar u)$ with $\bar\lambda$ positive and $\bar u$ nontrivial and nonnegative. Under additional hypotheses on the nonlinearities we can prove that the FBVP~\eqref{4ord-intro}--\eqref{kmt-intro}--\eqref{fbc-intro} admits uncountably many solution pairs, this is illustrated in Example~\ref{examp}.
In some sense we extend the existence result in \cite{giaml22}, in which the author considered the ODE case, namely
$$
u^{(4)}(t) +\lambda f(t,u(t),u'(t),u''(t),u'''(t))=0, \quad t\in [0,1],
$$
under cantilever-type BCs.
We also mention the recent papers \cite{aklb21, li, mwylgl21}
for different and related existence results for fourth order ODEs with nonlocal boundary terms. We stress that the functional formulation $B$ in \eqref{fbc-intro}
is fairly general and can be used to deal with the interesting cases of nonlinear and nonlocal BCs; we refer a reader interested in these topics to the reviews~\cite{Cabada1, Conti, rma, Picone, sotiris, Stik, Whyburn} and the manuscripts~\cite{Chris-bj, kttmna, jw-gi-jlms}. Also note that, in our setting, the functional term $B$ is allowed to depend on the datum $\psi$, this is illustrated in~\eqref{dta-dep}.

Our approach is of topological nature and consists, roughly speaking, in rewriting the FBVP~\eqref{4ord-intro}--\eqref{kmt-intro}--\eqref{fbc-intro} into a perturbed Hammerstein 
integral equation of which we seek solutions in an appropriate subset of a Banach space.
As in the recent papers \cite{acgi16, acgi2} we work within the context of \emph{affine cones}. This setting seems to be quite natural for the case of delay equations subject to boundary conditions.
We apply a kind of Birkhoff-Kellogg type theorem in affine cones, obtained recently by the authors in \cite{acgi2}. This result can be considered as a complement of the interesting topological results in affine cones proved by Djebali and Mebarki in \cite{djeb2014}.

We stress that, in the context of higher-order functional differential equations, the choice of the functional space is not automatic, see for example the Introduction of \cite{BCFP17} for some remarks in this direction. If the fourth-order equation is associated to an initial-value problem, it seems to be convenient to work in the space $C^3([-r, 0], \R)$ like in \cite{BCFP17}. Here, on the other hand, when considering the BVP \eqref{4ord-intro}--\eqref{kmt-intro}--\eqref{fbc-intro}
 we fix $\psi \in C^2([-r, 0], \R)$ and look for solutions in the space $C^2([-r, 0], \R)$.

Our starting point to build the solution operator will be the data $\psi(0)$, $\psi'(0)$, $\psi''(0)$
 and the Green's functions associated to these data. This is a key difference with respect to the previous papers \cite{acgi16,acgi2}, where we assumed \textit{homogeneous} data for the function $\psi$ and its derivatives at $t_0=0$.
Another remarkable difference with \cite{acgi16,acgi2} is the following.
Here we work in an affine cone which is not ``centered'' in the initial datum $\psi$. Instead we consider a different translation obtained with a modified function $\widehat\psi$ which also takes into account the BC~\eqref{fbc-intro}: in fact, for each $j$ we construct an affine cone with a different vertex. This is also a crucial methodological difference with respect to the paper~\cite{giaml22}, where no translation is needed due to the different boundary data.

We close the paper with an illustrating specific example, in the spirit of \cite{acgi2, giaml22}, concerning 
the particular case of a parametrized fourth-order delay differential equation with three, possibly different, time-lags:
 \[
u^{(4)}(t)+\lambda f(t,u(t),u'(t),u''(t),u(t-r_0), u'(t-r_1),u''(t-r_2)),\ t \in [0,1].
\]

\section{Setting of the problem} 

Let $I\subset \R$ be a compact real interval. By $C^2(I, \R)$ we denote the Banach space of the twice
continuously differentiable functions defined on $I$ with the norm $$\|u\|_{I,2}:=\max\{\|u\|_{I,\infty},\|u'\|_{I,\infty},\|u''\|_{I,\infty}\},$$ where
$\|u\|_{I,\infty}:=\sup_{t\in I}|u(t)|$.

In the paper we use the following notation, which is standard in retarded functional differential equations (cfr.\ \cite{halelunel}).
Given a positive real number $r>0$, a continuous function $u: J \to \R$, defined on a real interval
$J$, and given any $t \in \R$ such that $[t-r, t] \subseteq J$, by
$u_t : [-r, 0] \to \R$ we mean the function defined by
$u_t (\theta) = u(t + \theta)$.

As pointed out in the Introduction, we study parametrized fourth order functional differential equations of type
\begin{equation} \label{eq-4ord}
u^{(4)}(t)+\lambda F(t,u_t)=0,\ t \in [0,1],
\end{equation}
with initial conditions
\begin{equation}\label{eq-ic}
u(t)=\psi(t),\ t \in [-r,0],
\end{equation}
together with one of the following functional (non necessarily local) BCs
\begin{equation}\label{eq-fbc-all}
u^{(j)}(1)=\lambda B[u],
\end{equation}
where can be either $0$ or $1$, $2$, $3$.

In the BVP~\eqref{eq-4ord}--\eqref{eq-ic}--\eqref{eq-fbc-all},
$\lambda$ is a nonnegative parameter, 
$B$ is a suitable positive continuous functional, to be defined later, while the initial datum
$$\psi: [-r,0] \to [0,+\infty)$$ is a given function  twice continuously differentiable and such that
 $\psi'(0)$, $\psi''(0)$ are nonnegative.

Regarding the operator $F: [0,1] \times C^2([-r, 0], \R) \to [0,\infty)$,
throughout the paper we will assume the following
Carath\'eodory-type conditions (see also \cite{halelunel}):
\begin{itemize}
\item  for each $\phi$, $t \mapsto F(t,\phi)$ is measurable;
\item  for a.\,e. $t$, $\phi \mapsto F(t,\phi)$ is continuous;
\item for each $R>0$, there exists $\varphi_{R} \in
L^{\infty}[0,1]$ such that{}
\begin{equation*}
F(t,\phi) \le \varphi_{R}(t) \ \text{for all} \ \phi \in C^2([-r, 0], \R)
\ \text{with} \ \|\phi\|_{[-r,0],2} \le R,\ \text{and a.\,e.}\ t\in [0,1].
\end{equation*}{}
\end{itemize}

 \subsection{A Birkhoff-Kellogg type theorem in affine cones}

In this section we recall a Birkhoff–Kellogg type result in affine cones recently proved in \cite{acgi2}. The proof relies on classical fixed point index theory for compact maps. We refer a reader interested in the fixed point index to the review of Amann \cite{amann} and to the book by  Guo and Lakshmikantham \cite{guolak}.

Let us firstly recall some useful notation.
Let $(X,\| \, \|)$ be a real Banach space. A \emph{cone} $K$ of $X$  is a closed set with $K+K\subset K$, $\mu K\subset K$ for all $\mu\ge 0$ and $K\cap(-K)=\{0\}$.
For $y\in X$, the \emph{translate} of the cone $K$ is defined as
$$
K_y:=y+K=\{y+x: x\in K\}.
$$
Given a bounded and open (in the relative
topology) subset $\Omega$  of $K_y$, we denote by $\overline{\Omega}$ and $\partial \Omega$
the closure and the boundary of $\Omega$ relative to $K_y$.
Given an open
bounded subset $D$ of $X$ we denote $D_{K_y}=D \cap K_y$, an open subset of
$K_y$.

We can now state the Birkhoff–Kellogg type result in affine cones.

\begin{thm}[\cite{acgi2}, Corollary 2.4] \label{BK-transl-norm}
Let $(X,\| \, \|)$ be a real Banach space, $K\subset X$ be a cone and
 $D\subset X$ be an open bounded set with $y \in D_{K_y}$ and
$\overline{D}_{K_y}\ne K_y$. Assume that $\F:\overline{D}_{K_y}\to K$ is
a compact map and assume that 
$$
\inf_{x\in \partial D_{K_y}}\|\F (x)\|>0.
$$
Then there exist $x^*\in \partial D_{K_y}$ and $\lambda^*\in (0,+\infty)$ such that $x^*= y+\lambda^* \F (x^*)$.
\end{thm}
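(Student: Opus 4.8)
The plan is to deduce the statement from the fixed point index for compact maps on the affine cone $K_y$, which one defines by transporting the classical cone index along the homeomorphism $x\mapsto x-y$ that carries $K_y$ onto $K$. Throughout I write $T_\lambda:=y+\lambda\F$, so that $T_\lambda:\overline{D}_{K_y}\to K_y$ is compact for every $\lambda\ge 0$, and a pair $(\lambda^*,x^*)$ as in the conclusion is exactly a fixed point of $T_{\lambda^*}$ lying on $\partial D_{K_y}$ with $\lambda^*>0$. I also record the two quantities that drive the argument: $\eta:=\inf_{x\in\partial D_{K_y}}\|\F(x)\|>0$ (the hypothesis) and $R:=\sup_{x\in\overline{D}_{K_y}}\|x-y\|<\infty$, finite since $D$ is bounded.

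First I would compute the index at the two ends of the parameter range. At $\lambda=0$ the map $T_0\equiv y$ is constant with value $y\in D_{K_y}$, so the normalization property gives $i_{K_y}(T_0,D_{K_y})=1$. At the other end I claim $i_{K_y}(T_\lambda,D_{K_y})=0$ once $\lambda$ is large. The starting observation is that for $x\in\partial D_{K_y}$ one has $\|T_\lambda(x)-y\|=\lambda\|\F(x)\|\ge\lambda\eta$, so as soon as $\lambda>R/\eta$ the image $T_\lambda(x)$ leaves $\overline{D}_{K_y}$; in particular $T_\lambda$ is fixed-point free on $\partial D_{K_y}$ and the index is defined. To see that it actually vanishes I would use the escape direction made available by the hypothesis $\overline{D}_{K_y}\ne K_y$: this forces $K\ne\{0\}$, so I may fix $e\in K$ with $\|e\|=1$ and invoke the index-zero criterion for translated cones (the affine analogue of Krasnoselskii's expansion lemma), namely that $i_{K_y}(T_\lambda,D_{K_y})=0$ provided $x\ne y+\lambda\F(x)+s\,e$ for all $x\in\partial D_{K_y}$ and all $s\ge 0$.

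The conclusion is then purely homotopic. Consider the homotopy $\lambda\mapsto T_\lambda$ on $[0,\Lambda]$, with $\Lambda>R/\eta$ large enough for the index-zero step. If this homotopy were admissible, i.e.\ fixed-point free on $\partial D_{K_y}$ for every $\lambda\in[0,\Lambda]$, then homotopy invariance would give $1=i_{K_y}(T_0,D_{K_y})=i_{K_y}(T_\Lambda,D_{K_y})=0$, a contradiction. Hence there exist $\lambda^*\in[0,\Lambda]$ and $x^*\in\partial D_{K_y}$ with $x^*=y+\lambda^*\F(x^*)$; since $T_0\equiv y\in D_{K_y}$ has no fixed point on $\partial D_{K_y}$, necessarily $\lambda^*>0$, which is the assertion. (I note in passing that interior fixed points of $T_\lambda$ are allowed to persist throughout, as they carry index $0$ for large $\lambda$; this is why the argument hinges on the boundary lower bound rather than on a fixed-point-free map.)

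The main obstacle is the index-zero computation at the upper end. Verifying the displayed nonexistence condition for all $s\ge 0$ is delicate because in a general (non-normal) cone the norm of $\lambda\F(x)+s\,e$ need not dominate $\lambda\|\F(x)\|$, so the crude estimates $\|\lambda\F(x)+s\,e\|\ge\lambda\eta-s$ and $\|\lambda\F(x)+s\,e\|\ge s-\lambda M$ (with $M:=\sup_{\overline{D}_{K_y}}\|\F\|$) leave an intermediate range of $s$ uncovered. This is exactly the point at which the positivity $\F(x)\in K$, the lower bound $\eta$, and the geometry encoded in $\overline{D}_{K_y}\ne K_y$ must be combined, as in the index lemmas of \cite{djeb2014} and \cite{acgi2}, rather than handled by norm inequalities alone; everything else reduces to the standard normalization and homotopy properties of the index.
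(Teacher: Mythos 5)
First, a point of reference: the paper does not prove Theorem~\ref{BK-transl-norm} at all --- it is imported verbatim from \cite{acgi2} (Corollary 2.4 there), with only the remark that the proof ``relies on classical fixed point index theory for compact maps.'' So your proposal can only be measured against the strategy of the cited source, and in outline you have reproduced it faithfully: transport the index to the translate $K_y$ as in \cite{djeb2014}, get $i_{K_y}(T_0,D_{K_y})=1$ by normalization since $T_0\equiv y\in D_{K_y}$, argue that $i_{K_y}(T_\Lambda,D_{K_y})=0$ for $\Lambda$ large, and conclude by homotopy invariance that the family $T_\lambda$ cannot be admissible on all of $[0,\Lambda]$, which produces the boundary eigenpair with $\lambda^*>0$. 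Your preliminary estimates (admissibility of $T_\Lambda$ on $\partial D_{K_y}$ for $\Lambda>R/\eta$, compactness of the homotopy, exclusion of $\lambda^*=0$) are all correct.

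The problem is that the one step carrying all the content --- $i_{K_y}(T_\Lambda,D_{K_y})=0$ --- is not proved, and you say so yourself. This is a genuine gap, not a deferrable technicality. The criterion you propose (find a fixed $e\in K\setminus\{0\}$ with $x\neq y+\Lambda\F(x)+s\,e$ for all $s\ge 0$ and $x\in\partial D_{K_y}$) cannot be verified from the stated hypotheses by the norm estimates you list: in a cone that is not normal, $\|\lambda\F(x)+s\,e\|$ controls neither $\lambda\|\F(x)\|$ nor $s\|e\|$ from below, and your own inequalities leave a range of $s$ uncovered. Worse, the desired vanishing of the index cannot be a purely metric consequence of ``$T_\Lambda$ is fixed-point free on $\partial D_{K_y}$'': the hypothesis bounds $\|\F\|$ from below only on the boundary, so $T_\Lambda$ may retain interior fixed points (e.g.\ $\F(y)=0$ forces $y$ to be a fixed point of every $T_\lambda$, with local index $1$), and your parenthetical claim that interior fixed points ``carry index $0$ for large $\lambda$'' is exactly as unproven as the index-zero statement itself. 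The correct route --- whether via the norm-type expansion lemma of \cite{guolak}, via the index computations on translates of cones in \cite{djeb2014}, or via the auxiliary results preceding Corollary 2.4 in \cite{acgi2} --- must combine the positivity $\F(x)\in K$, the boundary lower bound, and the hypothesis $\overline{D}_{K_y}\ne K_y$ in an essential way, and none of that combination appears in the proposal. As it stands, the contradiction $1=0$ is never actually established, so the argument does not close.
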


\section{The associated Green's functions and existence results}

In order to illustrate our strategy, let us first focus on the following case, in which the nonlocal BC \eqref{fbc-intro} involves the third derivative at $t=1$; namely:
\begin{equation} \label{eq.BC3}
 \begin{cases}
u^{(4)}(t)+\lambda F(t,u_t)=0,\ & t \in [0,1], \\
u(t)=\psi(t),\  & t \in [-r,0], \\
u'''(1)=\lambda B[u],
 \end{cases}
\end{equation}
By means of a superposition principle, we associate to \eqref{eq.BC3} a \emph{perturbed} Hammerstein integral equation, in the spirit of \cite{acgi16, gi-wil, gi-jw-ems-06}.
Recall that $\psi \in C^2([-r, 0], \R)$, so the ``functional'' initial condition implies that
$u(0)=\psi(0)$, $u'(0)=\psi'(0)$, $u''(0)=\psi''(0)$.

First observe that, given a continuous function $y$, the BVP
\begin{equation*} \label{eq.BC3-omog}
 \begin{cases}
-u^{(4)}(t)=y(t),\  t \in [0,1], \\
u(0)=u'(0)=u''(0)=u'''(1)=0,
 \end{cases}
\end{equation*}
has the unique solution
$$
u(t)= \int_{0}^{1} k(t,s)y(s)ds,
$$
where the Green's function is, see for example \cite{zhang22},
$$
k(t,s)= \frac{1}{6}\begin{cases}t^3,\ &0\leq t\leq s\leq 1,\\ s(3t^2-3ts+s^2),\ &0\leq s\leq t\leq 1,
\end{cases}
$$

In particular, for the kernel $k$ the following positivity property holds:
$$
k(t,s) \geq 0\ \text{on}\ [0,1]\times [0,1],  
$$

A direct computation shows that the following problems have the corresponding solutions: The BVP
\begin{equation*} \label{eq.BC3-gamma0}
 \begin{cases}
u^{(4)}(t)=0,\  t \in [0,1], \\
u(0)=1, \ u'(0)=u''(0)=u'''(1)=0,
 \end{cases}
\end{equation*}
is solved by the constant
$\gamma_0(t)=1$; the BVP
\begin{equation*} \label{eq.BC3-gamma1}
 \begin{cases}
u^{(4)}(t)=0,\  t \in [0,1], \\
u'(0)=1, \ u(0)=u''(0)=u'''(1)=0,
 \end{cases}
\end{equation*}
is solved by 
$\gamma_1(t)=t$;  the BVP
\begin{equation*} \label{eq.BC3-gamma2}
 \begin{cases}
u^{(4)}(t)=0,\  t \in [0,1], \\
u''(0)=1, \ u(0)=u'(0)=u'''(1)=0,
 \end{cases}
\end{equation*}
is solved by
$\gamma_2(t)=\frac12 t^2$;
while for the condition at $t=1$ we have
\begin{equation*} \label{eq.BC3-gamma3}
 \begin{cases}
u^{(4)}(t)=0,\  t \in [0,1], \\
u(0)=u'(0)=u''(0)=0, \ u'''(1)=1,
 \end{cases}
\end{equation*}
which is solved by
$\gamma_3(t)=\frac16 t^3$.

Observe that $\gamma_i(t) \geq 0\ \text{on}\ [0,1]$, for $i=0,\dots,3$.
Moreover, one can check by direct computation that the maps  
$$t\mapsto k(t,s)H(t) \quad t\mapsto \gamma_3(t)H(t),  \quad t \in [-r,1] $$ 
are of class $C^2([-r, 1], \R)$,
where 
$$
H(\tau)=
\begin{cases}
1,\ & \tau \geq 0, \\0,\ & \tau<0.
\end{cases}
$$

Now, define
\begin{equation} \label{def-psihat-new}
\widehat\psi(t) :=
\begin{cases}
\psi(t),\ & t \leq 0, \\
\gamma_0(t) \psi(0)
+ \gamma_1(t) \psi'(0) + \gamma_2(t) \psi''(0),\ & t > 0.
\end{cases}
\end{equation}
and observe that, being $\psi$ of class $C^2$, the function $\widehat\psi$ has the same regularity.

Due to the above setting, the FBVP \eqref{eq.BC3}
can be rewritten in the following form:
\begin{equation}\label{eqhamm-0}
u(t)=\widehat\psi(t) + \lambda \Bigl( \int_{0}^{1} k(t,s)H(t)F(s,u_s)\,ds +  H(t)\gamma_3(t) B[u] \Bigr),\quad t \in [-r, 1]
\end{equation}

By $K_0$ we denote the following cone of non-negative functions in the Banach space $C^2([-r, 1], \R)$:
$$
K_0=\{u\in C^2([-r, 1], \R): u(t)\geq 0\ \forall t\in[-r,1],\ \text{and}\ u(t)=u'(t)=u''(t)= 0 \ \forall t\in[-r,0]\}.
$$
Observe that that the function 
$$w(t)=\begin{cases}
0,\ & t \in [-r,0], \\
t^3,\ & t \in [0,1],
\end{cases}
$$
belongs to $K_0$, hence $K_0 \neq \{0\}$.

For a given $\Psi \in  C^2([-r, 1], \R)$, we let $K_\Psi$ be the following translate of the cone $K_0$
$$K_\Psi=\Psi + K_0 = \{\Psi +u : u \in K_0\}.$$
\begin{defn}
Given $\Psi \in  C^2([-r, 1], \R)$ and $\rho>0$, we define the following subsets of $C^2([-r, 1], \R)$: 
$$K_{0,\rho}:=\{u\in K_0: \|u\|_{[0,1],2} <\rho\},\quad 
K_{\Psi,\rho}:= \Psi + K_{0,\rho}.$$
\end{defn}

As a consequence of the above Corollary \ref{BK-transl-norm}, we get the following existence result.

\begin{thm}\label{eigen}Let $\rho \in (0,+\infty)$ and assume the following conditions hold. 

\begin{itemize}

\item[$(a)$] 
There exist $\underline{\delta}_{\rho} \in C([0,1],\R_+)$ such that
\begin{equation*}
F(t,\phi)\ge \underline{\delta}_{\rho}(t),\ \text{for every}\ (t,\phi)\in [0,1] \times
C^2([-r, 0], \R)
\ \text{with} \ \|\phi\|_{[-r,0],2} \le \rho+\|\widehat\psi\|_{[-r,1],2}.
\end{equation*}

\item[$(b)$] 
$B: \overline K_{\widehat\psi,\rho} \to \R_{+}$ is continuous and bounded, in particular let $\underline{\eta}_{\rho}\in [0,+\infty)$ be such that 
\begin{equation*}
B[u]\geq \underline{\eta}_{\rho},\ \text{for every}\ u\in \partial K_{\widehat\psi,\rho}.
\end{equation*}
\item[(c)] 
The inequality 
\begin{equation}\label{condc}
\sup_{t\in [0,1]}\Bigl\{ \gamma_3(t)\underline{\eta}_{\rho}+\int_{0}^{1}  k(t,s) \underline{\delta}_{\rho} (s)\,ds\Bigr\}>0 
\end{equation}
holds.
\end{itemize}
Then there exist $\lambda_\rho\in (0,+\infty)$ and $u_{\rho}\in \partial K_{\widehat\psi,\rho}$ that satisfy the BVP~\eqref{eq.BC3}.
\end{thm}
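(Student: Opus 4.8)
The plan is to recast the FBVP as a parametrised fixed-point-type equation in the affine cone $K_{\widehat\psi}$ and to apply Theorem~\ref{BK-transl-norm} with vertex $y=\widehat\psi$. First I would introduce the operator $\F$ on $\overline{K}_{\widehat\psi,\rho}$ defined by
$$
\F(u)(t):=\int_{0}^{1}k(t,s)H(t)F(s,u_s)\,ds+H(t)\gamma_3(t)B[u],\qquad t\in[-r,1],
$$
so that the perturbed Hammerstein equation~\eqref{eqhamm-0} reads $u=\widehat\psi+\lambda\,\F(u)$, which is exactly the conclusion of Theorem~\ref{BK-transl-norm} after identifying $\lambda^*=\lambda$ and $x^*=u$. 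The preliminary point to settle is that $\F$ takes values in the cone $K_0$ (not merely in its translate): nonnegativity is immediate from $k,\gamma_3\ge 0$, $F\ge 0$ and $B\ge 0$, while the vanishing of $\F(u)$ together with its first two derivatives on $[-r,0]$ follows from the $C^2$-regularity of the maps $t\mapsto k(t,s)H(t)$ and $t\mapsto \gamma_3(t)H(t)$ recorded above, together with $k(0,s)=\gamma_3(0)=0$ and the analogous vanishing of their first two $t$-derivatives at $t=0$.

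Next I would verify the two analytic hypotheses required by Theorem~\ref{BK-transl-norm}. Compactness of $\F$ rests on the Carath\'eodory conditions on $F$ --- in particular the growth bound $\varphi_R$ controls the integrand uniformly on $\overline{K}_{\widehat\psi,\rho}$ --- combined with the continuity and boundedness of $B$ granted by hypothesis $(b)$; an Arzel\`a--Ascoli argument applied simultaneously to $\F(u)$, $\F(u)'$ and $\F(u)''$ then gives relative compactness in $C^2([-r,1],\R)$.

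For the lower bound $\inf_{u\in\partial K_{\widehat\psi,\rho}}\|\F(u)\|>0$, the key observation is the norm identity on the cone: for $u=\widehat\psi+v\in\partial K_{\widehat\psi,\rho}$ one has $\|v\|_{[-r,1],2}=\|v\|_{[0,1],2}=\rho$, since $v$ and its first two derivatives vanish on $[-r,0]$. Consequently $\|u_s\|_{[-r,0],2}\le \rho+\|\widehat\psi\|_{[-r,1],2}$ for every $s\in[0,1]$, which is precisely the range in which hypothesis $(a)$ applies and yields $F(s,u_s)\ge\underline{\delta}_{\rho}(s)$. Together with $B[u]\ge\underline{\eta}_{\rho}$ from $(b)$ and the positivity of $k$ and $\gamma_3$, for $t\in[0,1]$ this gives
$$
\F(u)(t)\ge \int_{0}^{1}k(t,s)\underline{\delta}_{\rho}(s)\,ds+\gamma_3(t)\underline{\eta}_{\rho}.
$$
Since $\|\F(u)\|_{[-r,1],2}\ge\|\F(u)\|_{[-r,1],\infty}\ge\sup_{t\in[0,1]}\F(u)(t)$, taking the supremum over $t\in[0,1]$ and invoking hypothesis $(c)$ delivers the required strict positivity.

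Finally I would record the topological data. Taking $D$ to be the open ball of $C^2([-r,1],\R)$ centred at $\widehat\psi$ of radius $\rho$, the identity $\|v\|_{[-r,1],2}=\|v\|_{[0,1],2}$ on $K_0$ gives $D_{K_{\widehat\psi}}=K_{\widehat\psi,\rho}$, with $\widehat\psi\in K_{\widehat\psi,\rho}$, while $\overline{D}_{K_{\widehat\psi}}\ne K_{\widehat\psi}$ because $K_0\ne\{0\}$ admits elements of arbitrarily large norm (e.g. multiples of $w$). Theorem~\ref{BK-transl-norm} then produces $u_{\rho}\in\partial K_{\widehat\psi,\rho}$ and $\lambda_{\rho}\in(0,+\infty)$ with $u_{\rho}=\widehat\psi+\lambda_{\rho}\,\F(u_{\rho})$, which is~\eqref{eqhamm-0} and hence, by the superposition principle, a solution of the FBVP~\eqref{eq.BC3}. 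I expect the main technical obstacle to be the compactness argument in the $C^2$-topology, as it requires uniform equicontinuity of the second derivative $\F(u)''$ and careful handling of the factor $H(t)$ across $t=0$; by contrast, the lower-bound estimate is essentially forced by hypotheses $(a)$--$(c)$ once the norm identity $\|v\|_{[-r,1],2}=\|v\|_{[0,1],2}$ on $K_0$ is in hand.
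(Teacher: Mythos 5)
Your proposal is correct and follows essentially the same route as the paper: you define the same operator $\F$, show it maps $\overline{K}_{\widehat\psi,\rho}$ into $K_0$ and is compact, derive the same pointwise lower bound from $(a)$--$(c)$, and invoke Theorem~\ref{BK-transl-norm} with vertex $\widehat\psi$. You in fact supply some details the paper leaves implicit (the norm identity on $K_0$ guaranteeing that hypothesis $(a)$ applies to $u_s$, and the verification of the topological hypotheses on $D$), but the argument is the same.
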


\begin{proof}
Let
$$\F u(t):= \int_{0}^{1} k(t,s)H(t)F(s,u_s)\,ds +  H(t)\gamma_3(t) B[u]. $$
Observe that the operator $\F$ maps
$\overline{K}_{\widehat\psi,\rho}$ into $K_0$ and is compact. The compactness of  the Hammerstein integral operator is a consequence of the regularity assumptions on the terms occurring in it combined with a careful use of  the Arzel\`{a}-Ascoli theorem (see~\cite{Webb-Cpt}), while the perturbation term
$H(t)\gamma_3(t) B[ \cdot ]$ is a finite rank operator. 

Take $u\in \partial K_{\widehat\psi, \rho}$. Then we have

\begin{multline}\label{est1}
\|  \F u\|_{[-r, 1],2}\geq   \|  \F u\|_{[-r, 1],\infty}=\sup_{t\in [0, 1]} \Bigl| \int_{0}^{1} k(t,s)F(s,u_s)\,ds +  \gamma_3(t) B[u]\Bigr |\\
\geq \sup_{t\in [0,1]}\Bigl\{ \gamma_3(t)\underline{\eta}_{\rho}+\int_{0}^{1}  k(t,s) \underline{\delta}_{\rho} (s)\,ds\Bigr\}.
\end{multline}

Note that the RHS of \eqref{est1} does not depend on the particular $u$ chosen. Therefore we have
$$
\inf_{u\in \partial K_{\widehat\psi, \rho}}\|  \F u\|_{[-r, 1],2} \geq \sup_{t\in [0,1]}\Bigl\{ \gamma_3(t)\underline{\eta}_{\rho}+\int_{0}^{1}  k(t,s) \underline{\delta}_{\rho} (s)\,ds\Bigr\}>0,
$$
and the result follows by Theorem~\ref{BK-transl-norm}.
\end{proof}

\subsection{A unified approach}
Our purpose is now to show that, in the case of the BC~\eqref{eq-fbc-all} with $j=0,1,2$, we can follow the same procedure as above and obtain a result
like Theorem~\ref{eigen}.
In other words, given $j$, the corresponding FBVP can be rewritten in the form \eqref{eqhamm-0} for a suitable choice of $\widehat\psi$, $k$, $\gamma_3$
(recall that $\gamma_i$, $i=0,1,2$, are needed in \eqref{def-psihat-new}, i.e. the definition of $\widehat\psi$).
Hereafter, with  a slight abuse of notation, for simplicity we use the same notation for $k$ and $\gamma_i$ for all the FBVPs.

$\bullet$ \textbf{The case $j=0$ in \eqref{eq-fbc-all}.}\ Firstly we consider the case, in which the nonlocal BC \eqref{fbc-intro} involves $u(1)$.
\begin{equation} \label{eq.BC0}
 \begin{cases}
u^{(4)}(t)+\lambda F(t,u_t)=0,\ & t \in [0,1], \\
u(t)=\psi(t),\  & t \in [-r,0], \\
u(1)=\lambda B[u],
 \end{cases}
\end{equation}
We build up, as above, an integral equation equivalent to \eqref{eq.BC0}.

Given a continuous function $y$, the BVP:
\begin{equation*} \label{eq.BC0-omog}
 \begin{cases}
-u^{(4)}(t)=y(t),\  t \in [0,1], \\
u(0)=u'(0)=u''(0)=u(1)=0,
 \end{cases}
\end{equation*}
has the unique solution
$$
u(t)= \int_{0}^{1} k(t,s)y(s)ds,
$$
where the Green's function is
$$
k(t,s)= \frac{1}{6}\begin{cases}t^3(1-s)^3,\ &0\leq t\leq s\leq 1,\\ s(1-t)(s^2t^2-3st^2+3t^2+s^2t-3st+s^2),\ &0\leq s\leq t\leq 1.
\end{cases}
$$
In a similar way as above, 
direct computations show that the following problems have the corresponding solutions:
\begin{equation*} \label{eq.BC0-gamma0}
 \begin{cases}
u^{(4)}(t)=0,\  t \in [0,1], \\
u(0)=1, \ u'(0)=u''(0)=u(1)=0,
 \end{cases}
\end{equation*}
is solved by
$\gamma_0(t)=1-t^3$;
\begin{equation*} \label{eq.BC0-gamma1}
 \begin{cases}
u^{(4)}(t)=0,\  t \in [0,1], \\
u'(0)=1, \ u(0)=u''(0)=u(1)=0,
 \end{cases}
\end{equation*}
is solved by 
$\gamma_1(t)=t-t^3$;
\begin{equation*} \label{eq.BC3-gamma2}
 \begin{cases}
u^{(4)}(t)=0,\  t \in [0,1], \\
u''(0)=1, \ u(0)=u'(0)=u(1)=0.
 \end{cases}
\end{equation*}
 is solved by
$\gamma_2(t)=\frac12 t^2(1-t)$;
while for the condition at $t=1$ we have
\begin{equation*} \label{eq.BC3-gamma3}
 \begin{cases}
u^{(4)}(t)=0,\  t \in [0,1], \\
u(0)=u'(0)=u''(0)=0, \ u(1)=1,
 \end{cases}
\end{equation*}
which is solved by
$\gamma_3(t)= t^3$.
We can apply then apply the same procedure as in the previous case.

$\bullet$ \textbf{The case $j=1$ in \eqref{eq-fbc-all}.}\ A similar approach can be followed for the case
\begin{equation*} \label{eq.BC1}
 \begin{cases}
u^{(4)}(t)+\lambda F(t,u_t)=0,\ & t \in [0,1], \\
u(t)=\psi(t),\  & t \in [-r,0], \\
u'(1)=\lambda B[u].
 \end{cases}
\end{equation*}
In fact,  given a continuous function $y$, the BVP:
\begin{equation*} \label{eq.BC1-omog}
 \begin{cases}
-u^{(4)}(t)=y(t),\  t \in [0,1], \\
u(0)=u'(0)=u''(0)=u'(1)=0,
 \end{cases}
\end{equation*}
has the unique solution
$$
u(t)= \int_{0}^{1} k(t,s)y(s)ds,
$$
where the Green's function is
$$
k(t,s)= \frac{1}{6}\begin{cases}t^3(1-s)^2,\ &0\leq t\leq s\leq 1,\\ s(st^3-2t^3+3t^2-3st+s^2),\ &0\leq s\leq t\leq 1.
\end{cases}
$$

Elementary computations show that the following problems have the corresponding solutions:
\begin{equation*} \label{eq.BC1-gamma0}
 \begin{cases}
u^{(4)}(t)=0,\  t \in [0,1], \\
u(0)=1, \ u'(0)=u''(0)=u'(1)=0,
 \end{cases}
\end{equation*}
is solved by
$\gamma_0(t)=1$;
\begin{equation*} \label{eq.BC1-gamma1}
 \begin{cases}
u^{(4)}(t)=0,\  t \in [0,1], \\
u'(0)=1, \ u(0)=u''(0)=u'(1)=0,
 \end{cases}
\end{equation*}
is solved by 
$\gamma_1(t)=t-\frac13t^3$;
\begin{equation*} \label{eq.BC1-gamma2}
 \begin{cases}
u^{(4)}(t)=0,\  t \in [0,1], \\
u''(0)=1, \ u(0)=u'(0)=u'(1)=0,
 \end{cases}
\end{equation*}
 is solved by
$\gamma_2(t)= t^2(\frac12-\frac13t)$;
while for the condition at $t=1$ we have
\begin{equation*} \label{eq.BC1-gamma3}
 \begin{cases}
u^{(4)}(t)=0,\  t \in [0,1], \\
u(0)=u'(0)=u''(0)=0, \ u'(1)=1,
 \end{cases}
\end{equation*}
which is solved by
$\gamma_3(t)=\frac12 t^3$.

$\bullet$ \textbf{The case $j=2$ in \eqref{eq-fbc-all}.}\ For completeness we discuss also the case
\begin{equation*} \label{eq.BC2}
 \begin{cases}
u^{(4)}(t)+\lambda F(t,u_t)=0,\ & t \in [0,1], \\
u(t)=\psi(t),\  & t \in [-r,0], \\
u''(1)=\lambda B[u].
 \end{cases}
\end{equation*}
Notice that, given a continuous function $y$, the BVP:
\begin{equation*} \label{eq.BC2-omog}
 \begin{cases}
-u^{(4)}(t)=y(t),\  t \in [0,1], \\
u(0)=u'(0)=u''(0)=u''(1)=0,
 \end{cases}
\end{equation*}
has the unique solution
$$
u(t)= \int_{0}^{1} k(t,s)y(s)ds,
$$
where the Green's function is
$$
k(t,s)= \frac{1}{6}\begin{cases}t^3(1-s)^2,\ &0\leq t\leq s\leq 1,\\ s(-t^3+3t^2-3st+s^2),\ &0\leq s\leq t\leq 1.
\end{cases}
$$

Elementary computations in this case show that the following problems have the corresponding solutions:
\begin{equation*} \label{eq.BC2-gamma0}
 \begin{cases}
u^{(4)}(t)=0,\  t \in [0,1], \\
u(0)=1, \ u'(0)=u''(0)=u''(1)=0,
 \end{cases}
\end{equation*}
is solved by
$\gamma_0(t)=1$;
\begin{equation*} \label{eq.BC2-gamma1}
 \begin{cases}
u^{(4)}(t)=0,\  t \in [0,1], \\
u'(0)=1, \ u(0)=u''(0)=u''(1)=0,
 \end{cases}
\end{equation*}
is solved by 
$\gamma_1(t)=t$;
\begin{equation*} \label{eq.BC2-gamma2}
 \begin{cases}
u^{(4)}(t)=0,\  t \in [0,1], \\
u''(0)=1, \ u(0)=u'(0)=u''(1)=0,
 \end{cases}
\end{equation*}
 is solved by
$\gamma_2(t)= \frac12 t^2(1-\frac13t)$;
while for the condition at $t=1$ we have
\begin{equation*} \label{eq.BC2-gamma3}
 \begin{cases}
u^{(4)}(t)=0,\  t \in [0,1], \\
u(0)=u'(0)=u''(0)=0, \ u''(1)=1,
 \end{cases}
\end{equation*}
which is solved by
$\gamma_3(t)=\frac16 t^3$.

Notice that in all the cases illustrated above the kernels $k$ and the functions $\gamma_i$ are nonnegative.

\section{Examples}

We remark that
our theory can be applied to \emph{delay differential equations}~(DDEs). Namely, 
 let $f:[0,1]\times\R_+\times\R\times\R \times\R_+\times\R\times\R \to [0,\infty)$ be a given Carath\'eodory map.
Consider the parametrized fourth-order DDE with three time-lags
 \begin{equation}\label{eqdelay4}
u^{(4)}(t)+\lambda f(t,u(t),u'(t),u''(t),u(t-r_0), u'(t-r_1),u''(t-r_2)),\ t \in [0,1],
\end{equation}
where the fixed (possibly different) delays $r_i$ are positive, for $i=0,\dots,2$.
We can apply the techniques developed in this paper to the equation 
\eqref{eqdelay4} with initial condition \eqref{eq-ic} along with one of the BCs \eqref{eq-fbc-all}.

To see this, observe that \eqref{eqdelay4} is a special case of the functional equation \eqref{eq-4ord}, in which taking $r:= \max\{r_i,i=0,\dots2\}$,
 the operator $F: [0,1] \times C^2([-r, 0], \R) \to [0,\infty)$ is defined by
$$
F(t,\phi)=f(t,\phi(0),\phi'(0),\phi''(0),\phi(-r_0),\phi'(-r_1),\phi''(-r_2)).
$$ 
Such an operator satisfies the above Carath\'eodory-type conditions if the map $f$ satisfies analogous properties; namely,

$\bullet$\ for each $R>0$, there exists $\varphi^*_{R} \in
L^{\infty}[0,1]$ such that{}
\begin{align*}
 f(t,u,v,w, \xi, \eta, \zeta ) \le \varphi^*_{R}(t) \; & \text{ for all  } \; (u,v,w, \xi, \eta, \zeta) \in \R_+\times\R\times\R \times\R_+\times\R\times\R \\
\;\text{  with } \; 0\le u,\xi \le R,\; & |v|,|w|,|\eta|,|\zeta| \le R,\;\text{ and a.\,e. } \; t\in [0,1].
\end{align*}

In the following illustrating example (cfr.\ \cite{acgi2, giaml22}) we consider a specific DDE of type
\eqref{eqdelay4}, with $r= \max\{r_i,i=0,\dots2\}=1/2$. Observe in particular that the vertex $\widehat\psi$ of the affine cone, to which the solutions belong, is obtained by a $C^2$-prolongation of the datum $\psi$ after $t_0=0$.

\begin{ex}\label{examp}
We  consider the family of FBVPs
\begin{equation}\label{eqdiffex}
u^{(4)}+\lambda te^{u(t)+\bigl(u''\bigl(t-\frac13\bigl)\bigr)^2}\Bigl(1+(u'(t))^2+\Bigl(u\Bigl(t-\frac12\Bigr)\Bigr)^2+\Bigl(u''\Bigl(t-\frac14\Bigr)\Bigr)^2\Bigr),\ t \in (0,1),
\end{equation}
with the initial condition
\begin{equation}\label{inex}
u(t)=\psi(t), t \in \Bigl[-\frac12,0\Bigr],
\end{equation}
with $\psi(t)=H(-t) \cdot (1-\cos t)$,
and one of the four BCs~\eqref{eq-fbc-all}.
For example we choose $j=3$, so that the functional BC is 
\begin{equation}\label{fbc-ex}
u'''(1)=\lambda B[u],
\end{equation}
where we fix
\begin{equation}\label{dta-dep}
B[u]=\frac{1}{1+(u(\frac{1}{2}))^2}+\int_{-\frac12}^1t^3(u''(t))^2\, dt.
\end{equation}
Thus the function $\widehat \psi$ is given by
$$\widehat\psi(t) =
\begin{cases}
1-\cos t,\ &  -\frac12 \leq t \leq 0, \\
\frac12 t^2,\ & 0< t \leq 1.
\end{cases}
$$

Now choose $\rho\in (0,+\infty)$. We may take 
$$\underline{\eta}_{\rho}(t)=\frac{1}{1+\rho^2}, \underline{\delta}_{\rho}(t)=t.$$ 

Therefore we have
$$
\sup_{t\in [0,1]}\Bigl\{ \frac{\frac12 t^2 }{1+\rho^2}+\int_{0}^{1}  k(t,s)t\,ds\Bigr\}\geq \frac{1}{6(1+\rho^2)} > 0, 
$$
which implies that \eqref{condc} is satisfied for every $\rho \in (0,+\infty)$.

Thus we can apply Theorem~\ref{eigen} obtaining uncountably many pairs of solutions and parameters $(u_{\rho}, \lambda_{\rho})$ for the FBVP
 \eqref{eqdiffex}--\eqref{inex}--\eqref{fbc-ex}.
\end{ex}

\section*{Acknowledgements}
The authors were partially supported by
 the Gruppo Nazionale per l'Analisi Matematica, la Probabilit\`a e le loro Applicazioni (GNAMPA) of the Istituto Nazionale di Alta Matematica (INdAM).
G.~Infante is a member of the UMI Group TAA  ``Approximation Theory and Applications''.

\end{document}